\documentclass[11pt]{article}
\usepackage{amsthm,enumerate}
\typeout{11pt or 12pt recommended}
\renewcommand{\baselinestretch}{1.2}
\addtolength{\textwidth}{2.3cm} 
\addtolength{\oddsidemargin}{-1cm}
\addtolength{\textheight}{3cm}
\addtolength{\topmargin}{-1cm}
\newcommand{\dated}{\mbox{} \hfill {\small [{\tt \today}]}} \usepackage{amsmath,amssymb,amsfonts,diagrams}
%
% proofs, remarks and the like
%
\newenvironment{keywords}{\noindent\small {\it Keywords\/}:}{\vskip 4pt}
\newenvironment{classification}{\noindent\small 2010 {\it Mathematics Subject
Classification\/}:}{\vskip 12pt}

%
% some greek characters
%

%
% mathematical expressions
%

\newcommand{\posints}{{\mathbb N}}

\newcommand{\free}{{\mathbb F}}

\newcommand{\cstar}{{C^\ast}}

\newcommand{\A}{{\mathfrak A}}

\newcommand{\Hilbert}{{\mathfrak H}}

 % theorems etc.
%
\usepackage{amsthm,enumerate}
\theoremstyle{plain}
\newtheorem{theorem}{Theorem}[section]
\newtheorem{lemma}[theorem]{Lemma}
\newtheorem{corollary}[theorem]{Corollary}
\newtheorem{proposition}[theorem]{Proposition}
\theoremstyle{definition}
\newtheorem{definition}[theorem]{Definition}
\theoremstyle{remark}

\newtheorem*{rems}{Remarks}
\newtheorem*{exs}{Examples}

\newenvironment{items}{\begin{enumerate}[\rm (i)]}{\end{enumerate}}
\newenvironment{alphitems}{\begin{enumerate}[\rm (a)]}{\end{enumerate}}

\title{Operator ultra-amenability}
\author{\textit{Brian E.\ Forrest}\thanks{Research supported by NSERC.} \and \textit{Volker Runde}\thanks{Research supported by NSERC.} \and \textit{Kyle Schlitt}}
\date{}
\begin{document}
\maketitle
\begin{abstract}
Extending M.\ Daws' definition of ultra-amenable Banach algebras, we introduce the notion of operator ultra-amenability for completely contractive Banach algebras. For a locally compact group $G$, we show that the operator ultra-amenability of $A(G)$ imposes severe restrictions on $G$. In particular, it forces $G$ to be a discrete, amenable group with no infinite abelian subgroups. For various classes of such groups, this means that $G$ is finite.
\end{abstract}
\begin{keywords}
amenability; completely contractive Banach algebras; operator spaces; ultrapowers.
\end{keywords}
\begin{classification}
46M07, 46H20 (primary), 46J99, 47L05.
\end{classification}
\section*{Introduction}
In (\cite{Joh}), B.\ E.\ Johnson characterized the amenable locally compact groups $G$ in terms of a cohomological triviality condition of their group algebras $L^1(G)$. This cohomological triviality condition can be extended to arbitrary Banach algebras and defines the class of amenable Banach algebras.
\par
In (\cite{Daw}), M.\ Daws defined---motivated by his research with the second named author in (\cite{DR})---the notion of ultra-amenability of a Banach algebra. Given a Banach algebra $\A$ and an ultrafilter $\mathcal U$ over an arbitrary index set, the ultrapower $(\A)_{\mathcal U} $ is again a Banach algebra (\cite[Proposition 3.1(i)]{Hei}). Consequently, Daws defined $\A$ to be ultra-amenable if $(\A)_{\mathcal U}$ is amenable in the sense of \cite{Joh} for every ultrafilter $\mathcal U$ over any any index set. Daws proved that a $\cstar$-algebra is ultra-amenable if and only if it is subhomogeneous and that, for a locally compact group $G$ satisfying certain properties, the group algebra $L^1(G)$ is ultra-amenable only if $G$ is finite (\cite[Theorem 5.17]{Daw}), e.g., if $G$ is abelian, compact, or discrete (\cite[Theorems 5.9 and 5.11]{Daw}). He strongly suspected that $L^1(G)$ is ultra-amenable if and only $G$ is finite for \emph{every} locally compact group $G$.
\par
In this note, we introduce a notion of ultra-amenability in the operator space context and focus, in particular, on the Fourier algebra of a locally compact group.
\par
This paper is part of the third named author's PhD thesis under the second author's supervision.
\begin{sloppy} \section{Operator ultra-amenability for completely contractive Banach algebras} \end{sloppy}
In \cite{Rua1}, Z.-J.\ Ruan initiated the theory of abstract operator spaces: these spaces can be completely isometrically be represented as subspace of $\mathcal{B}(\Hilbert)$ for some Hilbert space $\Hilbert$. It is straightforward that the ultraproduct construction in the Banach space category carries over to the category of operator spaces (\cite[Section 10.3]{ER}). In \cite{Rua2}, Ruan introduced the notion of a \emph{completely contractive Banach algebra}: this is an algebra equipped with an operator space structure such that multiplication is a completely contractive bilinear map. Consequently, Ruan defined the notion of an \emph{operator amenable, completely contractive Banach algebra} by requiring the bounded derivations in Johnson's original definition of an amenable Banach algebra to be \emph{completely bounded}.
\par
If $\A$ is a completely contractive Banach algebra, and $\mathcal U$ is an ultrafilter over an arbitrary index set, then $(\A)_{\mathcal U}$ is also a completely contractive Banach algebra (as is easy to see). 
\par 
We thus define:
\begin{definition}
Let $\A$ be a completely contractive Banach algebra. We say that $\A$ is operator ultra-amenable if the ultrapower $(\A)_{\mathcal U}$ is operator amenable for any ultrafilter $\mathcal U$ over an arbitrary index set.
\end{definition}
\par
There are various canonical functors from the category of Banach spaces into the category of operator spaces. One of them is the $\max$ functor that assigns to each Banach space the \emph{largest} operator structure there is on it. By \cite[Proposition 1.5]{ER}, a Banach algebra $\A$ is amenable if and only if the completely contractible Banach algebra $\max \A$ is operator amenable. 
\par
We shall show that this is compatible with the ultraproduct construction:
\begin{proposition} \label{prop1}
Let $E$ be a Banach space and let $\mathcal{U}$ be an ultrafilter over any index set. Then the identity map on $(E)_{\mathcal{U}}$ induces a completely isomorphic isomorphism from $( \max E)_{\mathcal U}$ to $\max (E)_{\mathcal{U}}$.
\end{proposition}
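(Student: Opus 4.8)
The plan is to show that the identity is in fact a complete isometry, which \emph{a fortiori} yields the asserted complete isomorphism. Both $(\max E)_{\mathcal U}$ and $\max(E)_{\mathcal U}$ have the Banach space $(E)_{\mathcal U}$ as their common underlying space, so the identity is a well-defined linear bijection and only the matrix norms are at issue. Throughout I would use the basic structural fact for operator space ultrapowers that $M_n\big((E)_{\mathcal U}\big) = \big(M_n(E)\big)_{\mathcal U}$ isometrically for every $n$, so that $\|[(x^i_{jk})_{\mathcal U}]\|_{M_n((\max E)_{\mathcal U})} = \lim_{\mathcal U}\|[x^i_{jk}]\|_{M_n(\max E)}$.

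One inequality is immediate from the universal property of the $\max$ functor. Since $\max(E)_{\mathcal U}$ carries the largest operator space structure on $(E)_{\mathcal U}$, any Banach space contraction out of $(E)_{\mathcal U}$ into an operator space is automatically completely contractive on $\max(E)_{\mathcal U}$; applying this to the identity map $(E)_{\mathcal U}\to(\max E)_{\mathcal U}$, which is a Banach isometry, shows that $\id\colon \max(E)_{\mathcal U}\to(\max E)_{\mathcal U}$ is a complete contraction. Hence the matrix norms of $(\max E)_{\mathcal U}$ are dominated by those of $\max(E)_{\mathcal U}$.

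For the reverse inequality I would use the concrete description of the $\max$ matrix norms as an infimum over finite factorizations: for $x\in M_n(\max E)$ one has $\|x\|_{M_n(\max E)} = \inf \|\alpha\|\,(\max_k\|v_k\|_E)\,\|\beta\|$, the infimum being taken over all $m$ and all factorizations $x=\alpha\,\diag(v_1,\dots,v_m)\,\beta$ with scalar matrices $\alpha\in M_{n,m}$, $\beta\in M_{m,n}$ and $v_1,\dots,v_m\in E$. Given $x=[(x^i_{jk})_{\mathcal U}]$ and $\varepsilon>0$, one chooses for each $i$ a near-optimal factorization of $[x^i_{jk}]$. The main obstacle is that the number of terms $m=m_i$ in these factorizations may depend on $i$, which blocks a direct passage to the ultrapower. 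I would overcome this by establishing a uniform bound $m\le M(n)$ depending only on the matrix level $n$, and not on $i$ or on $E$: since $\mathrm{span}\{x^i_{jk}\}$ has dimension at most $n^2$ for every $i$, a Carath\'eodory-type dimension count applied to the factorization realizes the near-infimum with a bounded number of terms. Intersecting with a set in $\mathcal U$ and padding with zero vectors, one may then assume a common $m$, pass to the limits $\alpha=\lim_{\mathcal U}\alpha^i$ and $\beta=\lim_{\mathcal U}\beta^i$ in the finite-dimensional spaces $M_{n,m}$ and $M_{m,n}$, and set $v_k=(v^i_k)_{\mathcal U}\in(E)_{\mathcal U}$. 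A routine check then gives $x=\alpha\,\diag(v_1,\dots,v_m)\,\beta$ in $M_n((E)_{\mathcal U})$ together with $\|\alpha\|\,(\max_k\|v_k\|)\,\|\beta\|\le \lim_{\mathcal U}\|[x^i_{jk}]\|_{M_n(\max E)}+\varepsilon$, whence $\|x\|_{M_n(\max(E)_{\mathcal U})}\le \|x\|_{M_n((\max E)_{\mathcal U})}$.

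Combining the two inequalities shows that the matrix norms coincide, so the identity is a complete isometry and in particular the desired complete isomorphism. The step I expect to require the most care is the uniform bound on the number of factorization terms, since it is precisely what allows the finitely presented factorization data to be transported through the ultrafilter; the remaining verifications (convergence of the scalar matrices, identification of the limiting factorization, and the elementary norm estimates) are routine.
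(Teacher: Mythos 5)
Your first inequality is fine: the identity on $(E)_{\mathcal U}$ is an isometry into the operator space $(\max E)_{\mathcal U}$, so the universal property of $\max$ makes it a complete contraction from $\max (E)_{\mathcal U}$. The gap is in the reverse inequality, at precisely the step you flag as delicate: the claimed uniform bound $m \le M(n)$ on the number of terms in a near-optimal factorization. The argument you sketch does not deliver it. First, the set of elements $\alpha\,\diag(v_1,\dots,v_m)\,\beta$ with $\|\alpha\|\,\max_k\|v_k\|\,\|\beta\|\le 1$ is already convex (concatenate two factorizations with weights $\sqrt{t}$ and $\sqrt{1-t}$), so a Carath\'eodory argument gives no reduction in the number of terms. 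Second, the dimension count on $\operatorname{span}\{x^i_{jk}\}$ does not control the factorization, because the vectors $v^i_k$ need not lie in that span --- already $R\otimes v_1 + R\otimes v_2 = R\otimes(v_1+v_2)$ has one-dimensional entry span while $v_1,v_2$ are arbitrary --- and there is in general no bounded projection of $E$ onto a finite-dimensional subspace. Third, even granting an exact algebraic rewriting of the tensor with few terms, nothing in a dimension count controls the resulting $\|\alpha\|$ and $\|\beta\|$, which is the entire content of the $\max$ norm. So the crucial step is a genuine unproved claim, not a routine verification; note also that you assert a complete \emph{isometry} where the paper claims only a complete isomorphism, which suggests that some loss of constants is to be expected somewhere.

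The paper's route is different and sidesteps this issue: it writes $E$ as a quotient of $\ell^1(S)$, with $S$ the unit ball of $E$, observes that both the $\max$ functor and the ultrapower construction are compatible with (complete) quotient maps, and thereby reduces the statement to the single space $\ell^1(S)$, which is then treated by adapting the proof of \cite[Proposition 10.3.8]{ER}. If you want to keep a direct argument via the factorization formula, you must actually prove a bound on the number of terms that is uniform in $i$ and in $E$ \emph{with controlled norms}; otherwise the reduction to $\ell^1(S)$, where the extra structure makes the comparison of the two matricial norms tractable, is the safer path.
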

\begin{proof}
Let $S$ be the closed unit ball of $E$. It is obvious that $E$ is a quotient of $\ell^1(S)$. Both the ultrapower contruction and the $\max$ functor are compatible with taking quotients. It is thus sufficient to show that the canonical map from $\max (\ell^1(S))_{\mathcal{U}}$ to $(\max \ell^1(S))_{\mathcal{U}}$ is a completely isomorphic isomorphism.
\par
This, however, is easily accomplished by adapting the proof of (\cite[Proposition 10.3.8]{ER}).
\end{proof}
\begin{corollary} \label{maxcor}
Let $\A$ be a Banach algebra. Then $\A$ is ultra-amenable if and only if $\max \A$ is operator ultra-amenable. 
\end{corollary}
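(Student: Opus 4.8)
The plan is to chain together the two ingredients already in hand: the characterization of amenability via the $\max$ functor (\cite[Proposition 1.5]{ER}) and the compatibility of $\max$ with ultrapowers (Proposition \ref{prop1}). Fix an arbitrary ultrafilter $\mathcal U$ over an arbitrary index set. The underlying Banach algebra of both $(\max \A)_{\mathcal U}$ and $\max\bigl((\A)_{\mathcal U}\bigr)$ is the Banach-algebra ultrapower $(\A)_{\mathcal U}$; in particular they carry the same multiplication, and only their operator space structures differ. Applying Proposition \ref{prop1} with $E = \A$, the identity map on $(\A)_{\mathcal U}$ is a completely isomorphic isomorphism $(\max \A)_{\mathcal U} \to \max\bigl((\A)_{\mathcal U}\bigr)$. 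Since this map is the identity on the common underlying algebra, it is automatically multiplicative, hence a completely isomorphic isomorphism of completely contractive Banach algebras.

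Next I would invoke the fact that operator amenability is invariant under completely isomorphic isomorphisms of completely contractive Banach algebras: if $\theta\colon \B_1 \to \B_2$ is an algebra isomorphism that is completely bounded with completely bounded inverse, then pulling completely bounded derivations back along $\theta$ shows that $\B_1$ is operator amenable precisely when $\B_2$ is. Combined with the previous paragraph, this yields that $(\max\A)_{\mathcal U}$ is operator amenable if and only if $\max\bigl((\A)_{\mathcal U}\bigr)$ is operator amenable. By \cite[Proposition 1.5]{ER} applied to the Banach algebra $(\A)_{\mathcal U}$, the latter holds if and only if $(\A)_{\mathcal U}$ is amenable in Johnson's sense. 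Thus, for each $\mathcal U$, the algebra $(\max\A)_{\mathcal U}$ is operator amenable if and only if $(\A)_{\mathcal U}$ is amenable, and quantifying over all ultrafilters over all index sets gives exactly the asserted equivalence.

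I would not expect a genuine obstacle here, since the corollary is a formal consequence of Proposition \ref{prop1} together with the $\max$/amenability dictionary. The only points demanding a little care are the verification that the complete isomorphism furnished by Proposition \ref{prop1} respects the algebra structure—immediate, because the map is literally the identity on the single underlying Banach algebra $(\A)_{\mathcal U}$—and the (standard) observation that operator amenability is stable under completely bounded algebra isomorphisms.
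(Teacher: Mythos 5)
Your argument is correct and is precisely the formal chain the paper intends (the corollary is stated without proof as an immediate consequence of Proposition \ref{prop1} and \cite[Proposition 1.5]{ER}): identify $(\max\A)_{\mathcal U}$ with $\max\bigl((\A)_{\mathcal U}\bigr)$ via the identity map, note operator amenability is preserved under completely bounded algebra isomorphisms, and quantify over all ultrafilters.
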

\par
In \cite{Rua2}, Ruan showed that a $\cstar$-algebra is operator amenable if and only if it amenable. We show that the same holds true for operator ultra-amenability:
\begin{proposition}
For a $\cstar$-algebra $\A$, the following are equivalent:
\begin{items}
\item $\A$ is operator ultra-amenable;
\item $\A$ is ultra-amenable;
\item $\A$ is subhomogeneous.
\end{items}
\end{proposition}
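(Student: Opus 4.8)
The plan is to reduce everything to two results already at our disposal: Daws' characterization, which gives the equivalence (ii) $\Leftrightarrow$ (iii), and Ruan's theorem (recalled just above, \cite{Rua2}) that for a $\cstar$-algebra operator amenability and amenability coincide. The equivalence (ii) $\Leftrightarrow$ (iii) is exactly the assertion that a $\cstar$-algebra is ultra-amenable if and only if it is subhomogeneous, which is due to Daws (\cite{Daw}). Thus the only genuinely new work is to establish (i) $\Leftrightarrow$ (ii), and for that the decisive point is to identify the \emph{operator space} ultrapower of a $\cstar$-algebra with its $\cstar$-algebraic ultrapower.

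The key lemma I would prove is the following: for any ultrafilter $\mathcal{U}$, the ultrapower $(\A)_{\mathcal{U}}$ is again a $\cstar$-algebra, and the operator space structure it carries as an operator space ultrapower coincides completely isometrically with its canonical operator space structure as a $\cstar$-algebra. To verify this I would work at each matrix level. By the definition of the operator space ultraproduct (\cite[Section 10.3]{ER}), the matrix norm on $M_n((\A)_{\mathcal{U}})$ is the ultrapower norm $\lim_{\mathcal{U}} \| \cdot \|_{M_n(\A)}$, which is precisely the norm of the $\cstar$-algebra ultrapower $(M_n(\A))_{\mathcal{U}}$. On the other hand, the canonical operator space structure of the $\cstar$-algebra $(\A)_{\mathcal{U}}$ is determined by the unique $\cstar$-norms on the matrix algebras $M_n((\A)_{\mathcal{U}})$. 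Since the natural algebraic identification $M_n((\A)_{\mathcal{U}}) \cong (M_n(\A))_{\mathcal{U}}$ is a $\ast$-isomorphism of $\cstar$-algebras, these $\cstar$-norms agree with the ultrapower norms above, so the two families of matrix norms coincide.

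Granting this identification, the equivalence (i) $\Leftrightarrow$ (ii) follows immediately. For each ultrafilter $\mathcal{U}$ the ultrapower $(\A)_{\mathcal{U}}$ is a $\cstar$-algebra, so by Ruan's theorem it is operator amenable if and only if it is amenable. Quantifying over all ultrafilters $\mathcal{U}$, we conclude that $\A$ is operator ultra-amenable if and only if it is ultra-amenable. Combined with Daws' result this closes the cycle of implications.

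I expect the main obstacle to be the careful verification underlying the key lemma: one must check that the operator space ultraproduct structure really matches the canonical $\cstar$-structure at \emph{every} matrix level, i.e. that the identification $M_n((\A)_{\mathcal{U}}) \cong (M_n(\A))_{\mathcal{U}}$ is not merely isometric but faithfully transports the full matrix-norm data. Once this is in place the argument is purely a matter of invoking the cited theorems, so the entire weight of the proof rests on this single structural identification.
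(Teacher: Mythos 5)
Your proposal is correct and follows essentially the same route as the paper: (ii) $\Leftrightarrow$ (iii) is quoted from Daws, and (i) $\Leftrightarrow$ (ii) comes from observing that each ultrapower $(\A)_{\mathcal U}$ is again a $\cstar$-algebra (the paper cites Heinrich for this) and invoking Ruan's theorem that amenability and operator amenability coincide for $\cstar$-algebras. Your explicit matrix-level verification that the operator space ultrapower structure agrees with the canonical $\cstar$-operator space structure is a detail the paper leaves implicit, but it does not constitute a different approach.
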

\begin{proof}
(ii) $\Longleftrightarrow$ (iii) is from \cite{Daw} (see the corrigendum).
\par
(ii) $\Longrightarrow$ (i) is trivial in view of Corollary \ref{maxcor}.
\par
For (i) $\Longrightarrow$ (ii), let $\A$ be an operator ultra-amenable $\cstar$-algebra, and let $\mathcal U$ be an ultrafilter over an arbitrary index set. Then $(\A)_{\mathcal U}$ is operator amenable. However, $(\A)_{\mathcal U}$ is also  $\cstar$-algebra by \cite[Proposition 3.1(ii)]{Hei} and is therefore amenable by \cite[Theorem 5.1]{Rua2}. As $\mathcal U$ was arbitrary, $\A$ is ultra-amenable.
\end{proof}
\section{The case of the Fourier algebra}
It is fair to say that the notion of operator amenability was introduced with the Fourier algebra in mind. In \cite{FR}, the first and the second named author showed that, for a locally compact group $G$, its Fourier algebras $A(G)$ is amenable only $G$ is the finite extension of an abelian group. On the other hand, Ruan showed in \cite{Rua2} that $A(G)$ is \emph{operator amenable} if and only if $G$ is amenable---a much less restrictive property.
\par 
We now turn to the question for which locally compact groups $G$, the Fourier algebra $A(G)$ is operator ultra-amenable. In view of \cite{Daw}, it is very plausible to conjecture that, for any locally compact group $G$, the operator ultra-amenability of $A(G)$ would force $G$ to be finite. We shall not be able to prove this conjecture in full generality, but still establish it for important special cases.
\par
We start with a simple hereditary property:
\begin{lemma} \label{lem}
Let $G$ be a locally compact group such that $A(G)$ is operator ultra-amenable. Then $A(H)$ is operator ultra-amenable for every closed subgroup $H$ of $G$.
\end{lemma}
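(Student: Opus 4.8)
The plan is to realize $A(H)$ as a quotient of $A(G)$ and then push operator ultra-amenability through the ultrapower functor. The crucial input is Herz's restriction theorem: for a closed subgroup $H$ of $G$, every element of $A(H)$ extends to an element of $A(G)$, and the restriction map $r\colon A(G)\to A(H)$ is a complete quotient map. Since restriction of functions is multiplicative, $r$ is moreover a completely contractive \emph{algebra} homomorphism. Thus $r$ exhibits $A(H)$ as a surjective completely contractive homomorphic image of $A(G)$, which is the hereditary structure we want to exploit.

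Next I would apply the operator-space ultrapower functor to $r$. Fix an ultrafilter $\mathcal U$ over an arbitrary index set. The induced map $(r)_{\mathcal U}\colon (A(G))_{\mathcal U}\to (A(H))_{\mathcal U}$ is again an algebra homomorphism, because multiplication is computed coordinatewise in the ultrapower. Moreover, the ultrapower of a complete quotient map is again a complete quotient map; in particular $(r)_{\mathcal U}$ is surjective, since a bounded representing family in $A(H)$ can be lifted coordinatewise through $r$ with control on the norms and then assembled into an element of $(A(G))_{\mathcal U}$, and the same argument carried out at every matrix level yields the complete quotient property. This is the operator-space analogue of the corresponding Banach-space fact, which can be read off from \cite[Section 10.3]{ER}.

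Finally, by hypothesis $(A(G))_{\mathcal U}$ is operator amenable. Operator amenability is inherited by the image under a completely bounded algebra homomorphism with dense range (Ruan's operator analogue of Johnson's classical hereditary result for amenability), and $(r)_{\mathcal U}$ is such a map, being in fact surjective. Hence $(A(H))_{\mathcal U}$ is operator amenable. As $\mathcal U$ was an arbitrary ultrafilter over an arbitrary index set, $A(H)$ is operator ultra-amenable, as required.

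The step I expect to demand the most care is verifying that $(r)_{\mathcal U}$ is genuinely a complete quotient map, and in particular surjective: although the ultrapower of a complete quotient map is again such, one must keep the norm bounds uniform across coordinates so that the lift really defines an element of the ultrapower, and one must check this simultaneously at every matrix level to obtain the completely bounded statement needed to invoke the operator-amenability transfer. Everything else is a formal migration of the standard hereditary argument for (operator) amenability into the ultrapower setting.
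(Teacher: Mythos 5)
Your proposal is correct and follows essentially the same route as the paper: realize $A(H)$ as a complete quotient of $A(G)$ via the restriction map (Herz's restriction theorem), note that the ultrapower construction preserves complete quotient maps, and transfer operator amenability to $(A(H))_{\mathcal U}$ via the surjective completely contractive homomorphism $(r)_{\mathcal U}$. The extra care you flag about uniform norm control at all matrix levels is a reasonable elaboration of what the paper states more tersely, but it is the same argument.
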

\begin{proof}
Let $H$ be a closed subgroup of $G$. Then the restriction map
\[
  A(G) \to A(H), \quad f \mapsto f |_H
\]
is a complete quotient map (and an algebra homomorphism).
\par
Let $\mathcal U$ be any ultrafilter over an arbitrary index set, so that $(A(G))_{\mathcal U}$ is operator amenable. Since the ultrafilter construction preserves (complete) quotient maps, the completely contractive Banach algebra $(A(H))_{\mathcal U}$ is also operator amenable. As $\mathcal U$ was arbitrary, this means that $A(H)$ is operator ultra-amenable. 
\end{proof}
\begin{corollary} \label{cor}
Let $G$ be a locally compact group such that $A(G)$ is operator ultra-amenable. Then every abelian subgroup of $G$ is finite.
\end{corollary}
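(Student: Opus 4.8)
The plan is to reduce the statement, via Lemma~\ref{lem}, to a theorem of Daws on abelian group algebras. Let $H$ be an abelian subgroup of $G$. Its closure $K := \overline{H}$ is again abelian (the commutator map is continuous and vanishes on $H \times H$, hence on $K \times K$) and is a closed, and therefore locally compact, subgroup of $G$. By Lemma~\ref{lem}, $A(K)$ is operator ultra-amenable. Since $H \subseteq K$, it suffices to prove that $K$ is finite; thus I may assume from the outset that $G = K$ is a \emph{locally compact abelian} group whose Fourier algebra is operator ultra-amenable, and I must show that $G$ is finite.

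The key step---and the one I expect to be the main obstacle---is to show that for abelian $K$ the Fourier algebra carries the \emph{maximal} operator space structure, i.e.\ $A(K) = \max A(K)$ as completely contractive Banach algebras. The operator space structure on $A(K)$ is the one for which $A(K)^* = \VN(K)$ completely isometrically. For abelian $K$ the group von Neumann algebra $\VN(K)$ is commutative (being $*$-isomorphic to $L^\infty(\widehat{K})$), and every commutative $C^*$-algebra carries the minimal operator space structure; hence $A(K)^* = \min A(K)^*$. I would then invoke the following general operator-space fact: if $V$ is an operator space with $V^* = \min V^*$, then $V = \max V$. Indeed, the formal identity $\iota \colon \max V \to V$ is always a complete contraction, with adjoint $\iota^* \colon V^* \to (\max V)^* = \min V^*$; under the hypothesis $\iota^*$ is a complete isometry, so $\iota$ is a complete quotient map, and being bijective it is a complete isometry. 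Applying this with $V = A(K)$ yields $A(K) = \max A(K)$.

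With this identification in hand the conclusion follows quickly. Because $A(K) = \max A(K)$ is operator ultra-amenable, Corollary~\ref{maxcor} shows that $A(K)$, viewed as a Banach algebra, is ultra-amenable. The Fourier transform is an isometric algebra isomorphism $A(K) \cong L^1(\widehat{K})$, and ultra-amenability is preserved under isomorphism of Banach algebras, so $L^1(\widehat{K})$ is ultra-amenable. Since $\widehat{K}$ is a locally compact abelian group, Daws' theorem on abelian group algebras (\cite{Daw}) forces $\widehat{K}$ to be finite, and by Pontryagin duality $K$ itself is finite. Hence $H$ is finite, as required. The only genuinely delicate point is the maximality claim of the middle paragraph; the reduction and the final invocation of Daws' result are routine once it is established.
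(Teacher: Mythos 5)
Your proposal is correct and follows essentially the same route as the paper: pass to the closure of the abelian subgroup, apply Lemma~\ref{lem}, identify $A(K)$ with $L^1(\widehat{K})$ via the Fourier transform as a \emph{completely} isometric isomorphism, and invoke Daws' theorem on abelian group algebras. The only difference is that you spell out in detail the point the paper leaves implicit---that $\VN(K)$ being commutative forces $A(K)=\max A(K)$, so that Corollary~\ref{maxcor} converts operator ultra-amenability into ultra-amenability of $L^1(\widehat{K})$---and your argument for that step is sound.
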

\begin{proof}
Let $H$ be an abelian subgroup of $G$, and suppose without loss of generality that it is closed in $G$. As the Fourier transform between $A(H)$ and $L^1(\hat{H})$ is a completely isometric isomorphism, it follows that $\hat{H}$---being abelian---has to be finite by (\cite[Theorem 5.9]{Daw}) as has $H$.
\end{proof}
\par
We can now collect (quite) a few examples for of locally compact groups for which $A(G)$ is \emph{not} operator ultra-amenable:
\begin{proposition} \label{prop2}
Let $G$ be a locally compact group and let $H$ be an infinite closed subgroup of $G$ such that either of the following holds:
\begin{alphitems}
\item $H$ is compact;
\item $H$ is connected.
\end{alphitems}
Then $A(G)$ is not operator ultra-amenable.
\end{proposition}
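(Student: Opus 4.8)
The plan is to reduce the assertion to a single group-theoretic statement by invoking Corollary~\ref{cor}. Suppose, towards a contradiction, that $A(G)$ is operator ultra-amenable. Then Corollary~\ref{cor} tells us that every abelian subgroup of $G$ is finite. Since $H$ is a closed subgroup of $G$, any abelian subgroup of $H$ is also an abelian subgroup of $G$; hence it would suffice to produce a \emph{single} infinite abelian subgroup of $H$ to force a contradiction. In other words, the whole proposition follows from the claim that an infinite locally compact group which is either compact or connected must contain an infinite abelian subgroup, and I would organize the proof around establishing this claim in the two cases.

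For case (a), with $H$ infinite and compact, I would simply cite Zelmanov's theorem that every infinite compact group contains an infinite abelian subgroup. This is the essential external input: the analogous statement is false for abstract infinite groups (Tarski monsters have no infinite abelian subgroups at all), so no soft argument is available, and the compact structure is genuinely used.

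For case (b), with $H$ infinite and connected, I would invoke the structure theory of connected locally compact groups. By Iwasawa's theorem $H$ possesses a maximal compact subgroup $K$; all maximal compact subgroups are conjugate, every compact subgroup of $H$ is contained in one of them, and $K$ is connected because $H$ is. If $K$ is infinite, then $K$ is an infinite compact group and case (a) already provides an infinite abelian subgroup. If $K$ is finite, then $K$ is trivial (a finite connected group is a point), so $H$ has no nontrivial compact subgroup whatsoever; hence $H$ has no small subgroups and is, by Gleason's theorem, a Lie group. Being infinite and connected, it has positive dimension and therefore carries a nontrivial one-parameter subgroup $t\mapsto\exp(tX)$ with $X\neq0$ in the Lie algebra of $H$, and the closure of $\{\exp(tX):t\in\reals\}$ is a nontrivial connected, hence infinite, abelian subgroup, as required.

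The main obstacle is the group-theoretic claim itself, and within it the compact case: Zelmanov's theorem is deep and admits no elementary substitute. The connected case, by contrast, is essentially organizational; the only points demanding care are that the maximal compact subgroup of a connected group is again connected and that the hypothesis ``infinite'' is transferred correctly through the decomposition. (An alternative for case (b) would be to pass, via Yamabe's theorem, to a Lie quotient $H/N$ with $N$ compact and then lift a one-parameter subgroup of $H/N$; but lifting a homomorphism through the compact kernel $N$ is exactly the delicate step, which is why routing everything through the maximal compact subgroup is the cleaner choice.)
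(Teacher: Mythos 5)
Your proof is correct, and case (a) coincides with the paper's (both reduce to Corollary~\ref{cor} plus Zelmanov's theorem that an infinite compact group contains an infinite abelian subgroup). Case (b), however, takes a genuinely different route. The paper invokes the Iwasawa-type decomposition of Montgomery--Zippin (\cite[Theorem 4.13]{MZ}) in one stroke: $H$ is homeomorphic to $H_1 \times \cdots \times H_n \times K$ via the product map, with $H_1, \ldots, H_n$ abelian subgroups and $K$ compact, so Corollary~\ref{cor} and part (a) make every factor finite and hence $H$ finite --- a contradiction with $H$ infinite. You instead run a dichotomy on the maximal compact subgroup $K$: if $K$ is infinite you fall back on case (a); if $K$ is finite (hence trivial, being connected), you argue that $H$ has no small subgroups, is therefore a Lie group by Gleason, and being connected and infinite carries a nontrivial one-parameter subgroup, which is the desired infinite abelian subgroup. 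Your chain of reasoning is sound --- in particular the step that ``no nontrivial compact subgroups'' implies ``no small subgroups'' works because the closure of a subgroup contained in a compact identity neighborhood is a compact subgroup. What the paper's argument buys is brevity: it never needs the NSS characterization of Lie groups or the exponential map, only the topological product decomposition. What yours buys is that it isolates and proves the purely group-theoretic claim (an infinite connected locally compact group contains an infinite abelian subgroup) in a way that exhibits the abelian subgroup concretely, and it correctly observes that Lemma~\ref{lem} is not even needed, since an abelian subgroup of $H$ is already an abelian subgroup of $G$ to which Corollary~\ref{cor} applies directly.
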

\begin{proof}
Both of these cases can be shown to follow from Corollary \ref{cor}.
\par
Case (a): This  follows from Corollary \ref{cor} and from a result by E.\ I.\ Zelmanov, which asserts that every infinite compact group contains an infinite abelian subgroup (see \cite{Zel}).
\par
Case (b): Assume that $A(G)$ is operator ultra-amenable. Then this is also true for $A(H)$. As $H$ is a connected group, there are, by \cite[Theorem 4.13]{MZ}, abelian subgroups $H_1, \ldots, H_n$ of $H$, as well as a compact subgroup $K$ of $H$, such that
\[
  H_1 \times \cdots \times H_n \times K \to H, \quad (x_1, \ldots, x_n, y) \mapsto x_1 \cdots x_n y
\]
is a homeomorphism. Corollary \ref{cor} and Proposition \ref{prop2}(a) thus yield that $H_1, \ldots, H_n$, and $K$ must be all be finite subgroups of $H$, so that ultimately $H$ is finite, which is a contradiction. 
\end{proof}
\begin{sloppy} \begin{theorem}
Let $G$ be a locally compact group such that $A(G)$ operator ultra-amenable. Then $G$ is discrete and amenable, and contains no infinite abelian subgroup.
\end{theorem}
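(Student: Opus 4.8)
The plan is to establish the three conclusions separately, leaning on the results already in place. The assertion that $G$ contains no infinite abelian subgroup is precisely Corollary~\ref{cor}, so nothing remains to be done there.

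For amenability, I would feed a principal ultrafilter into the definition. If $\mathcal U$ is a principal ultrafilter over any index set, then the ultrapower $(A(G))_{\mathcal U}$ is completely isometrically isomorphic to $A(G)$ itself; hence operator ultra-amenability forces $A(G)$ to be operator amenable. Ruan's characterization---that $A(G)$ is operator amenable exactly when $G$ is amenable (\cite{Rua2})---then yields that $G$ is amenable.

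The real content is discreteness, which I would obtain by contradiction using the structure theory of locally compact groups together with the two cases of Proposition~\ref{prop2}. First I would consider the identity component $G_0$, a closed connected subgroup of $G$: by Proposition~\ref{prop2}(b) it cannot be infinite, and a finite connected group is trivial, so $G_0 = \{e\}$ and $G$ is totally disconnected. Van Dantzig's theorem then furnishes a compact open subgroup $K \leq G$; by Proposition~\ref{prop2}(a) this $K$ cannot be infinite, so it is finite. A finite open subgroup makes $\{e\}$ open in $G$, and therefore $G$ is discrete.

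The principal obstacle---indeed essentially the only one---is assembling the right structural input for the discreteness step: one needs the reduction to the totally disconnected case via the identity component, and then van Dantzig's supply of a compact open subgroup, after which the two halves of Proposition~\ref{prop2} close the argument. By contrast, the amenability and abelian-subgroup claims are immediate consequences of the earlier material, requiring only the observation about principal ultrafilters and a direct appeal to Corollary~\ref{cor}.
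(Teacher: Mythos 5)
Your proposal is correct and follows essentially the same route as the paper: no infinite abelian subgroup via Corollary~\ref{cor}, amenability via operator amenability of $A(G)$ and Ruan's theorem, and discreteness by using Proposition~\ref{prop2}(b) to kill the identity component and then Proposition~\ref{prop2}(a) to show a compact open subgroup is finite. The only cosmetic difference is that you invoke van Dantzig's theorem for the compact open subgroup where the paper cites the Gleason--Yamabe theorem, but the structural fact used is the same.
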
 \end{sloppy}
\begin{proof} 
That $G$ has no infinite abelian subgroup is, of course, the statement of Corollary \ref{cor}. That $G$ is amenable follows from fact that $A(G)$ is also operator amenable (see \cite{Rua2}). 
\par
To see that $G$ is discrete, we first observe that Proposition \ref{prop2}(b) forces $G$ to be totally disconnected. It follows from the Gleason--Yamabe Theorem (see \cite{Yam}), that every totally disconnected locally compact group has a neighborhood base of the identity consisting of open, compact subgroups. Since any such subgroup must be finite by Proposition \ref{prop2}(a), this forces $G$ to be discrete.
\end{proof}
\par
In view of this theorem, we shall suppose for the remainder of the paper that all groups considered are discrete.
\par
Historically, O.\ Yu.\ \v{S}midt conjectured that every infinite group contained an infinite abelian subgroup. An affirmative answer to \v{S}midt's conjecture would thus have allowed us to show that that operator ultra-amenability of $A(G)$ would indeed force the group $G$ to be finite. However, S.\ I.\ Adian and P.\ S.\ Novikov (see \cite{Adi}) showed that \v{S}midt's conjecture is false, i.e., there are infinite groups without infinite abelian subgroups. Nevertheless, there are a number of important classes of groups $G$ for which we shall be able to show that the operator ultra-amenability of $A(G)$ does indeed imply that $G$ is finite. 
\par 
Recall that a group called
\begin{itemize}
\item \emph{periodic} if each of its element has finite order and 
\item \emph{locally finite} if each of its finite subsets generates a finite subgroup. 
\end{itemize}
\par
Clearly, every locally finite group is periodic whereas the converse is obviously false. One of the fundamental properties of locally finite groups is that if $G$ is infinite and locally finite, then $G$ contains an infinite abelian subgroup. This was established independently by P.\ Hall and C.\ R.\ Kulatilaka (\cite{HK}) as well as M.\ I.\ Kargarpolov (\cite{Kar}). As the operator amenability of $A(G)$ for every group $G$, forces $G$ to be periodic by Corollary \ref{cor}, this means:
\begin{sloppy} \begin{proposition} \label{prop3}
Let $G$ be a locally finite group such that $A(G)$ is operator ultra-amenable. Then $G$ is finite.  
\end{proposition}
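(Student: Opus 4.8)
The plan is to argue by contradiction, using the group-theoretic input of Hall--Kulatilaka and Kargarpolov together with the hereditary restriction on abelian subgroups already extracted in Corollary \ref{cor}. The whole point is that these two ingredients collide exactly at infinite abelian subgroups, so the proof should be a direct juxtaposition rather than a construction.

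First I would suppose, toward a contradiction, that $G$ is infinite. By hypothesis $G$ is locally finite, so the theorem of P.\ Hall and C.\ R.\ Kulatilaka (\cite{HK}), established independently by M.\ I.\ Kargarpolov (\cite{Kar}), applies: every infinite locally finite group contains an infinite abelian subgroup. Fix such a subgroup $H \leq G$.

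On the other hand, the standing assumption is that $A(G)$ is operator ultra-amenable, and Corollary \ref{cor} then forces \emph{every} abelian subgroup of $G$ to be finite. Applying this to $H$ shows that $H$ is finite, contradicting the fact that $H$ was chosen infinite. Therefore the assumption that $G$ is infinite is untenable, and $G$ must be finite.

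I do not anticipate any genuine obstacle here, since both the operator-algebraic restriction (Corollary \ref{cor}) and the purely group-theoretic fact about locally finite groups are available off the shelf; the only care needed is to cite the Hall--Kulatilaka--Kargarpolov result with the correct hypothesis (infinite \emph{and} locally finite) so that the dichotomy it provides lines up precisely with the finiteness conclusion of Corollary \ref{cor}.
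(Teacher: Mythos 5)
Your proof is correct and follows exactly the paper's own argument: the paper derives Proposition \ref{prop3} by combining the Hall--Kulatilaka/Kargarpolov theorem (an infinite locally finite group contains an infinite abelian subgroup) with Corollary \ref{cor}, which is precisely your contradiction. No issues.
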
 \end{sloppy}
\par
Amongst all discrete amenable groups, perhaps the simplest and best understood are the \emph{elementary amenable groups}. Recall that the elementary amenable groups form the smallest class  $\mathcal{E}$ of groups $G$ such that: 
\begin{alphitems} 
\item $\mathcal{E}$ contains all finite and all abelian groups;
\item if $G \in \mathcal{E}$ and $H$ is a group isomorphic to $G$, then $H \in \mathcal{E}$;
\item $\mathcal{E}$ is closed under forming subgroups, quotients, and extensions;
\item $\mathcal{E}$ is closed under directed unions.
\end{alphitems}
\par
Recall that a group is called \emph{linear} if it is isomorphic to a subgroup of $\mathrm{GL}(n,\free)$ for some field $\free$. Suppose that $G$ is linear and amenable, and let $H$ be a finitely generated subgroup of $G$. As a consequence, of the main result of \cite{Tit}, $H$ either contains a copy of the free group on two generators or has a solvable subgroup of finite index. Since $H$ is amenable, only the latter is possible. Now, solvable groups are elementary amenable, as are finite extensions of solvable groups. Since $G$ is the directed union of all of its finitely generated subgroups, it follows that $G$ is elementary amenable. Moreover, C.\ Chou (\cite[Theorem 2.3]{Cho}) showed that a periodic elementary amenable group is always locally finite. 
\par
We thus obtain the following corollary to Proposition \ref{prop3}:
\begin{corollary} \label{cor2}
Let $G$ be a linear group or an elementary amenable group such that $A(G)$ is operator ultra-amenable. Then $G$ is finite.  
\end{corollary}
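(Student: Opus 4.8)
The plan is to reduce both cases to Proposition \ref{prop3} by means of the structural facts assembled in the discussion preceding the statement. The pivotal observation, which I would record at the outset, is that operator ultra-amenability of $A(G)$ forces $G$ to be periodic. Indeed, by Corollary \ref{cor} every abelian subgroup of $G$ is finite; applying this to the cyclic subgroups $\langle g \rangle$ shows that each $g \in G$ has finite order, which is exactly periodicity. This is the hinge that converts the operator-algebraic hypothesis into an input that the purely group-theoretic machinery can digest.

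I would then dispose of the elementary amenable case directly. Here $G$ is periodic by the observation above, and Chou's theorem (\cite[Theorem 2.3]{Cho}) asserts that a periodic elementary amenable group is locally finite. Hence $G$ is locally finite, and Proposition \ref{prop3} immediately yields that $G$ is finite.

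For the linear case, my strategy is to reduce to the case just settled. Since $A(G)$ is operator ultra-amenable, the theorem above shows that $G$ is amenable, so it remains to argue that a linear amenable group is elementary amenable. I would take a finitely generated subgroup $H \le G$; by the Tits alternative (\cite{Tit}) either $H$ contains a copy of the free group on two generators---impossible since $H$ inherits amenability---or $H$ is virtually solvable and therefore lies in $\mathcal{E}$. Writing $G$ as the directed union of its finitely generated subgroups and invoking closure of $\mathcal{E}$ under directed unions then places $G$ in $\mathcal{E}$, and the elementary amenable case completes the argument.

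The heavy lifting having already been done in Corollary \ref{cor}, the theorem above, Proposition \ref{prop3}, and the cited results of Chou and Tits, there is no genuine analytic obstacle remaining; the only points demanding care are stating the periodicity reduction cleanly (so that the step from ``no infinite abelian subgroup'' to ``periodic'' is transparent) and correctly invoking the directed-union closure of $\mathcal{E}$ when passing from finitely generated subgroups back to $G$ in the linear case.
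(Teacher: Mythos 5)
Your proposal is correct and follows essentially the same route as the paper: the paper's argument (given in the discussion preceding the corollary) likewise deduces periodicity from Corollary \ref{cor}, uses the Tits alternative together with amenability of $G$ and closure of $\mathcal{E}$ under directed unions to show linear amenable groups are elementary amenable, invokes Chou's theorem to pass from periodic elementary amenable to locally finite, and concludes via Proposition \ref{prop3}. Your explicit remark that periodicity follows by applying Corollary \ref{cor} to cyclic subgroups is a welcome clarification of a step the paper leaves implicit.
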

\par
Finally, we say that a group $G$ has \emph{polynomial growth} if, for each finite set $F \subset G$, there is $p \in \posints$ such that $|F^n|=O(n^p)$ for all $n \in \posints$. (Here, $F^n$ denotes the set $\{ x_1 \cdots x_n : x_1, \ldots, x_n \in H \}$.) 
\par 
We have:
\begin{proposition} \label{prop4}
Let $G$ be a group with polynomial growth such that $A(G)$ is operator ultra-amenable. Then $G$ is finite.  
\end{proposition}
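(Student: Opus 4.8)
The plan is to show that the hypotheses force $G$ to be elementary amenable, and then to quote Corollary \ref{cor2}, which already converts elementary amenability plus operator ultra-amenability of $A(G)$ into finiteness of $G$. The deep external input that drives everything is Gromov's theorem, to the effect that a finitely generated group of polynomial growth is virtually nilpotent (contains a nilpotent subgroup of finite index). Recall that all groups are now assumed discrete, so every subgroup is closed and $G$ is the directed union of its finitely generated subgroups.

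First I would check that polynomial growth passes to finitely generated subgroups in the form Gromov needs. Fix a finitely generated subgroup $H = \langle S \rangle$ with $S$ finite, and put $F = S \cup S^{-1} \cup \{e\}$, a finite symmetric subset of $G$. Then $F^n$, as computed in $G$, is exactly the ball of radius $n$ for the word metric on $H$ determined by $S$, and since these products all lie in $H$ the growth of $H$ agrees with the count $|F^n|$. The defining condition $|F^n| = O(n^p)$ for some $p \in \posints$ therefore says precisely that $H$ has polynomial growth in the usual sense for finitely generated groups. By Gromov's theorem, $H$ thus contains a nilpotent subgroup $N$ of finite index. A nilpotent group is solvable and hence lies in the elementary amenable class $\mathcal{E}$; passing to the normal core of $N$ (still of finite index, and nilpotent as a subgroup of a nilpotent group) exhibits $H$ as an extension of a finite group by an elementary amenable group, so $H \in \mathcal{E}$ by closure of $\mathcal{E}$ under extensions.

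Now $G$ is the directed union of its finitely generated subgroups, each of which has just been shown to be elementary amenable, and $\mathcal{E}$ is closed under directed unions. Hence $G$ itself is elementary amenable, and Corollary \ref{cor2} gives that $G$ is finite, as desired. I expect no internal step to be a genuine obstacle: the argument is an assembly of standard facts, and the only serious ingredient is Gromov's theorem, which is cited as a black box. The two points warranting care are the reduction of the ``local'' polynomial growth condition of the definition to the finitely generated setting required by Gromov (handled by symmetrizing the generating set above) and the bookkeeping that virtually nilpotent groups, and directed unions of them, remain in $\mathcal{E}$. As an alternative endgame one could instead observe that a virtually nilpotent torsion group is finite—using that $G$ is periodic by Corollary \ref{cor}—to conclude directly that every finitely generated subgroup of $G$ is finite, so that $G$ is locally finite, and then invoke Proposition \ref{prop3}; but routing through elementary amenability and Corollary \ref{cor2} is the cleaner path.
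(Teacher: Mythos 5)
Your proposal is correct; the essential input (Gromov's theorem applied to the finitely generated subgroups of $G$) is the same as in the paper, but your endgame is different. The paper argues directly: since $G$ contains no infinite abelian subgroup (Corollary \ref{cor}), the finite-index nilpotent subgroup $N$ of a finitely generated $H \leq G$ must be finite (an infinite nilpotent group would contain an infinite abelian subgroup), hence $H$ is finite, hence $G$ is locally finite, and Proposition \ref{prop3} finishes. You instead show that each such $H$ is virtually nilpotent, hence elementary amenable, that $G$ is therefore elementary amenable as a directed union, and then invoke Corollary \ref{cor2}. Both are valid; the difference is only in which previously established result carries the load at the end. Your route through $\mathcal{E}$ is self-contained modulo Corollary \ref{cor2}, but note that Corollary \ref{cor2} itself rests on Chou's theorem (periodic elementary amenable groups are locally finite) plus Hall--Kulatilaka/Kargarpolov, so in total your chain of external citations is slightly longer than the paper's, which only needs the elementary fact about infinite nilpotent groups. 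The ``alternative endgame'' you sketch at the end---a virtually nilpotent torsion group is finite, so $G$ is locally finite, then Proposition \ref{prop3}---is in fact precisely the paper's proof. Your extra care in checking that the local polynomial growth condition restricts correctly to the word-metric growth of a finitely generated subgroup is a detail the paper glosses over, and is a welcome addition.
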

\begin{proof} Let $H$ be a finitely generated subgroup of $G$. Then $H$ also has polynomial growth. It follows from a result of M.\ L.\ Gromov \cite{Gro} that $H$ has a nilpotent subgroup $N$ of finite index in $H$. If $N$ were infinite, it would contain an infinite abelian subgroup, which is impossible by Lemma \ref{lem}. Consequently $N$ must be finite, as must be $H$. This shows that $G$ is locally finite. We conclude from Proposition \ref{prop3} that $G$ is finite. 
\end{proof}
\renewcommand{\baselinestretch}{1.0}
\renewcommand{\baselinestretch}{1.2}
\dated
\vfill
\begin{tabbing} 
\textit{Second and third authors' address}: \= \kill
\textit{First author's address}:            \> Department of Pure Mathematics \\
                                            \> University of Waterloo \\
                                            \> Waterloo, Ontario \\
                                            \> Canada N2L 3G1 \\[\medskipamount]
\textit{First author's e-mail}:             \> \texttt{beforres@uwaterloo.ca} \\[\bigskipamount]                             
\textit{Second and third authors' address}: \> Department of Mathematical and Statistical Sciences \\
                                            \> University of Alberta \\
                                            \> Edmonton, Alberta \\
                                            \> Canada T6G 2G1 \\[\medskipamount]
\textit{Second named author's e-mail}:      \> \texttt{vrunde@ualberta.ca}\\[\smallskipamount]
\textit{Third named author's e-mail}:       \> \texttt{schlitt@ualberta.ca}\\[\smallskipamount]     
\end{tabbing}

\begin{thebibliography}{00} \begin{small}
%
\bibitem{Adi} \textsc{S.\ I.\ Adian}, \textit{The Burnside Problem and Identities in Groups} (translated from the Russion by J.\ Lennox and J.\ Wiegold). Ergebnisse der Mathematik und ihrer Grenzgebiete \textbf{95}. Springer Verlag, 1979.
%
\bibitem{Cho} \textsc{C.\ Chou},  Elementary amenable groups. \textit{Illinois J.\ Math.}\ \textbf{24} (1980), 396--407. 
%
\bibitem{Daw} \textsc{M.\ Daws}, Amenability of ultrapowers of Banach algebras. \textit{Proc.\ Edinburgh Math.\ Soc.}\ \textbf{52} (2009), 307--338. Corrigendum. \textit{Proc.\ Edinburgh Math.\ Soc.}\ \textbf{53} (2010), 633--637.
%
\bibitem{DR} \textsc{M.\ Daws} and \textsc{V.\ Runde}, Can $\mathcal{B}(\ell^p)$ ever be amenable? \textit{Studia Math.}\ \textbf{188} (2008), 151--174.  
%
\bibitem{ER} \textsc{E.\ G.\ Effros} and \textsc{Z.-J.\ Ruan}, \textit{Operator Spaces}. London Mathematical Society Monographs (New Series) \textbf{23}. Clarendon Press, 2000.
%
\bibitem{FR} \textsc{B.\ E.\ Forrest} and \textsc{V.\ Runde}, Amenability and weak amenability of the Fourier algebra. \textit{Math.\ Z.}\ \textbf{250} (2005), 731--744.
%
\bibitem{Gro} \textsc{M.\ L.\ Gromov} Groups of polynomial growth and expanding maps. \textit{Inst. Hautes \'Etudes Sci.\ Publ.\ Math.}\ \textbf{53} (1981), 53--73. 
%
\bibitem{HK} \textsc{P.\ Hall} and \textsc{C.\ R.\ Kulatilaka}, A property of locally finite groups. \textit{J. London Math. Soc.}\ \textbf{39} (1964), 235--239.
%
\bibitem{Hei} \textsc{S.\ Heinrich}, Ultraproducts in Banach space theory. \textit{J.\ reine angew.\ Math.}\ \textbf{313} (1980), 72--104.
%
\bibitem{HR} \textsc{E.\ Hewitt} and \textsc{K.\ A.\ Ross}, \textit{Abstract Harmonic Analysis}, I. Second edition. Grundlehren der Mathematischen Wissenschaften \textbf{115}. Springer Verlag, 1979. 
%
\bibitem{Joh} \textsc{B.\ E.\ Johnson}, Cohomology in Banach algebras. \textit{Mem.\ Amer.\ Math.\ Soc.}\ \textbf{127} (1972).
%
\bibitem{Kar} \textsc{M.\ I.\ Kargarpolov}, On the problem of O.\ Yu.\ \v{S}midt (in Russian). \textit{Sibirsk.\ Mat.\ \v{Z}.}\ \textbf{4} (1963), 232--235.
%
\bibitem{MZ} \textsc{D.\ Montgomery} and \textsc{L.\ Zippin}, \textit{Topological transformation groups}. Robert E.\ Krieger Publishing Co., 1974. 
%
\bibitem{Pal} \text{T.\ W.\ Palmer}, \textit{Banach Algebras and the General Theory of $\ast$-Algebras}, II. Encyclopedia of Mathematics and its Applications \textbf{79}. Cambridge University Press, 2001.
%
\bibitem{Rua1} \textsc {Z.-J.\ Ruan}, Subspaces of $\cstar$-algebras. \textit{J.\ Funct.\ Anal.}\ \textbf{76} (1988), 217--230. 
%
\bibitem{Rua2} \textsc{Z.-J.\ Ruan}, The operator amenability of $A(G)$. \textit{Amer.\ J.\ Math.}\ \textbf{117} (1995).
%
\bibitem{Tit} \textsc{J.\ Tits}, Free subgroups in linear groups. \textit{Journal of Algebra} \textbf{20} (1972), 250--270.
%
\bibitem{Yam} \textsc{H.\ Yamabe}, A generalization of a theorem of Gleason. \textit{Ann.\ of Math.}\ \textbf{58} (1953), 351--365. 
%
\bibitem{Zel} \textsc{E.\ I.\ Zelmanov}, On periodic compact groups. \textit{Israel J.\ Math}.\ \textbf{77} (1992), 83--95.
%
\end{small} \end{thebibliography}
\end{document}